\newcounter{defcounter}
\theoremstyle{plain}
\newtheorem{theorem}{Theorem}
\newtheorem{proposition}[theorem]{Proposition}
\newtheorem{lemma}[theorem]{Lemma}
\newtheorem{proposition.definition}[theorem]{Proposition/Definition}
\newtheorem{theoremalpha}{Theorem}
\theoremstyle{definition}
\newtheorem{remark}[theorem]{Remark}
\newtheorem{example}[theorem]{Example}
\newcommand{\lra}{\longrightarrow}
\newcommand{\noi}{\noindent}
\newcommand{\PP}{\mathbf{P}}
\newcommand{\CC}{\mathbf{C}}
\newcommand{\OO}{\mathcal{O}}
\newcommand{\JJ}{\mathcal{J}}
\newcommand{\II}{\mathcal{I}}
\newcommand{\frakm}{\mathfrak{m}}
\newcommand{\eps}{\varepsilon}
\newcommand{\image}{\textnormal{im}}
\newcommand{\HH}[3]{H^{{#1}} \big( {#2} , {#3}
\big) }
\newcommand{\HHstar}[3]{H_{*}^{{#1}} \big( {#2} , {#3}
\big) }
\newcommand{\hh}[3]{h^{{#1}} \big( {#2} , {#3}
\big) }
\newcommand{\coker}{\textnormal{coker}}
\newcommand{\Div}{\text{Div}}
\newcommand{\lin}{\equiv_{\text{lin}}}
\newcommand{\Linser}[1]{| \mspace{1 mu} {#1}
\mspace{1 mu} |}
\newcommand{\linser}[1]{\Linser{  {#1}  }}
\newcommand{\pro}{\textnormal{pr}}
\newcommand{\Sym}{\textnormal{Sym}}
\newcommand{\Hom}{\textnormal{Hom}}
\numberwithin{theorem}{section}
\begin{document}

\title{Torelli Theorems for Some Steiner Bundles}

 \author{Robert Lazarsfeld}
  \address{Department of Mathematics, Stony Brook University, Stony Brook, New York 11794}
 \email{{\tt robert.lazarsfeld@stonybrook.edu}}
 \thanks{Research  of the first author partially supported by NSF grant DMS-1739285.}

 \author{John Sheridan}
  \address{Department of Mathematics, Princeton University, Princeton, New Jersey  08544}
 \email{{\tt john.sheridan@princeton.edu}}

\maketitle

\newcommand{\Val}{\textnormal{Vall\`es}}
\newcommand{\Steiner}{\textnormal{Steiner}}
\newcommand{\Spec}{\textnormal{Spec}}

\section*{Introduction}

A Steiner bundle is a vector bundle $E$ on projective space $\PP^r$ that sits in a short exact sequence
\begin{equation} \label{Steiner.SES.Intro} 0 \lra U_1 \otimes \OO_{\PP^r}(-1) \overset{\phi}{\lra} U_0 \otimes \OO_{\PP^r}\lra E \lra 0,\end{equation}
where $U_1$ and $U_0$ are finite-dimensional vector spaces. These bundles arise in several geometric settings, and by now they are  the focus of a  substantial literature (c.f.\! \cite{AnconaOttaviani, Arrondo, ArrondoMarchesi, Dolgachev.Kapranov, Ein.NB, Ellia.ea, Valles1, Valles2, Huizenga}).  When $\PP^r = \linser{V}$ is the projective space of one-dimensional subspaces of a vector space $V$, $\phi$ is given by a linear map
\begin{equation}  \label{Steiner.Mult.Intro.Eqn}
\mu : U_1 \otimes V \lra U_0 \end{equation}
having the property that $\mu(u_1\otimes v) \ne 0$ for all non-zero   $u_1 \in U_1, v \in V$.  For example, suppose that $V \subseteq \HH{0}{X}{A}$ is a very ample linear series on a smooth complex projective variety $X$ of dimension $n$. If $B$ is a sufficiently positive line bundle on $X$, then the Steiner  bundle corresponding to multiplication
\begin{equation} \label{Mult.Map.LBs.Intro}
\mu : \HH{0}{X}{B \otimes A^*} \otimes V \lra \HH{0}{X}{B} 
\end{equation}
is the tautological vector bundle $E_{|V|, B}$ on $\linser{V}$ whose fibre at $[s] \in \linser{V}$ is the vector space  $\HH{0}{X}{B \otimes \OO_{\Div(s)}}$. For  $X = \PP^1$ and $V = \HH{0}{\PP^1}{\OO_{\PP^1}(r)}$, these are known as Schwarzenberger bundles. The bundles $E_{|V|,B}$ were considered by Arrondo  in \cite{Arrondo} from a somewhat different perspective.

In their influential paper \cite{Dolgachev.Kapranov}, Dolgachev and Kapranov consider the  bundle $E =  \Omega^1_{\PP^r}  ( \log \Sigma H_i  )$ of logarithmic forms with poles along  a normal-crossing hyperplane arrangement  on $\PP^r$. 
They show that $E$ is a Steiner bundle, and they establish moreover that one can recover the arrangement from $E$ provided that the planes do not osculate a rational normal curve. This is the prototype of a Torelli-type statement, asserting that a Steiner bundle determines the geometric data used to construct it. Other results along these lines appear in the papers \cite{Arrondo,    Angelini,  BallicoHuh, Dolgachev, Faenzi.ea, Ueda, Valles1, Valles2}.

The purpose of this note is to point out that similar Torelli theorems  hold for the tautological bundles $E_{|V|, B}$ once  $B$ is positive enough. To begin  with, we prove 
\begin{theoremalpha} \label{Intro.Thm.A}
Let $X$ be a smooth projective variety, and let  $V \subseteq \HH{0}{X}{A}$ be a very ample linear series. Fix a line bundle $B$ on $X$ that satisfies
\begin{equation} \label{Van.Hypothesis.Theorem.A} \HH{i}{X}{B \otimes A^{\otimes -(i+1)}} \ = \ 0 \ \ \text{ for } \ \ i > 0. \notag \end{equation}
Then one can recover the embedding $X \subseteq \PP(V)$, together with the line bundle $B$,  from the Steiner bundle  associated to the multiplication mapping \eqref{Mult.Map.LBs.Intro}.
\end{theoremalpha}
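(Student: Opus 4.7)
The plan is to recover $X$ from $E^* := E_{|V|,B}^*$ by characterizing the image of $X$ inside the dual projective space $\PP(V^*)$ as the locus of hyperplanes $H \subseteq |V|$ on which $E^*$ has a nonzero global section, and then to recover $B$ as the line bundle on $X$ whose fibres are the corresponding one-dimensional cohomology spaces.

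Dualizing the Steiner sequence gives
$$0 \lra E^* \lra U_0^* \otimes \OO_{|V|} \lra U_1^* \otimes \OO_{|V|}(1) \lra 0,$$
where $U_0 = \HH{0}{X}{B}$ and $U_1 = \HH{0}{X}{B \otimes A^*}$. For $[\lambda] \in \PP(V^*)$, the hyperplane $H_\lambda = \PP(\ker\lambda) \subseteq |V|$ has
$$H^0(H_\lambda, E^*|_{H_\lambda}) \ = \ \ker\!\big(U_0^* \lra U_1^* \otimes (\ker\lambda)^*\big),$$
i.e., the space of $f \in U_0^*$ whose composition with $\mu$ vanishes on $U_1 \otimes \ker\lambda$. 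When $\lambda = \ev_x$ for $x \in X$, so $\ker\lambda = V_x$, one has $\mu(U_1 \otimes V_x) \subseteq \HH{0}{X}{\II_x \cdot B}$ and the evaluation functional $\ev_x \in U_0^*$ visibly lies in this kernel, producing a canonical nonzero section of $E^*|_{H_\lambda}$ whose span is naturally identified with $B(x)^*$.

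The main content of the proof is the converse. I would show that for every codimension-$1$ subspace $W \subseteq V$:
\begin{enumerate}
\item[(a)] if $W$ has empty base locus in $X$, then $\mu(U_1 \otimes W) = \HH{0}{X}{B}$, so $H^0(H, E^*|_H) = 0$;
\item[(b)] if the base locus of $W$ in $X$ is nonempty, then by very ampleness $W = V_x$ for a unique $x \in X$, and $\mu(U_1 \otimes V_x) = \HH{0}{X}{\II_x \cdot B}$, so $H^0(H, E^*|_H)$ is exactly $1$-dimensional, equal to $B(x)^*$.
\end{enumerate}
Both surjectivities are Castelnuovo-Mumford-type statements. The hypothesis $\HH{i}{X}{B \otimes A^{\otimes -(i+1)}} = 0$ for $i > 0$ says precisely that $B \otimes A^*$ is $0$-regular with respect to $A$; so (a) follows from the standard generalization of Mumford's theorem for base-point-free subspaces of $\HH{0}{X}{A}$, applied to the Koszul complex of a basis of $W$ twisted by $B \otimes A^*$. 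Statement (b) follows analogously, either by resolving $\II_x$ directly or by pulling back to the blowup of $X$ at $x$, where $V_x$ becomes base-point-free. The main technical obstacle I anticipate is making (a) work uniformly for \emph{every} such $W$ of codimension $1$---not merely for $W = V$ itself---so as to rule out unwanted hyperplanes giving sections of $E^*$.

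Granted (a) and (b), the locus $\{[\lambda] \in \PP(V^*) : H^0(H_\lambda, E^*|_{H_\lambda}) \ne 0\}$ is exactly the embedded $X$. For the line bundle $B$, one globalizes: letting $\mathcal{H} \subseteq \PP(V^*) \times |V|$ be the universal hyperplane with projections $q_1, q_2$, the restriction of $(q_1)_*(q_2^* E^*)$ to $X$ is a line bundle with fibre $B(x)^*$ at $x$ (by Grauert, since the $H^0$ has constant rank on $X$), and dualizing recovers $B$.
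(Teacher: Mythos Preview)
Your proposal is correct and follows essentially the same route as the paper: the paper's Proposition~1.3 is exactly your (a) and (b), proved via the Koszul complex and Castelnuovo--Mumford regularity (for (b) the paper notes that the Koszul complex of $V_x$ has homology supported at $x$, which already suffices for surjectivity onto $H^0(B\otimes\mathfrak m_x)$), and the identification of $X$ with the Vall\`es locus then follows from your Lemma on $H^0(H_\lambda,E^*|_{H_\lambda})$, which is the paper's Lemma~1.2. The only cosmetic difference is in recovering $B$: the paper reads off $\phi_{|B|}(x)\in\PP H^0(B)$ pointwise from the unique trivial quotient of $E|_{H_x}$, whereas you globalize via Grauert.
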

\noi In the event that $V$ is basepoint-free but possibly not very ample, one can recover from $E$ the image $\phi_{|V|}(X) \subseteq \PP(V)$.  The Theorem gives a partial answer to Question 0.2 of Arrondo's paper \cite{Arrondo}. 

Observe that if $ 
B= \OO_X(K_X + (n+2)A )$, then the hypothesis of the Theorem is satisfied automatically  thanks to Kodaira vanishing. It is natural to ask what happens for slightly less positive $B$, for instance $B = \OO_X(K_X + (n+1)A)$. The example where $X  = X_d \subseteq \PP^{n+1}$ is a smooth hypersurface of degree $d$, $A = \OO_X(1)$ and $V = \HH{0}{X}{A}$ is instructive here.  In this case $B = \OO_X(d-1) $, and therefore  the map $\mu$ in  \eqref{Mult.Map.LBs.Intro} is identified with 
 multiplication \[
\HH{0}{\PP^{n+1}}{\OO_{\PP^{n+1}} (d-2)} \otimes \HH{0}{\PP^{n+1}}{\OO_{\PP^{n+1}}(1)}\lra \HH{0}{\PP^{n+1}}{\OO_{\PP^{n+1}}(d-1)}.
\]
of all homogeneous polynomials of the indicated degrees. In particular, $\mu$ -- and hence also $E_{|V|, B}$ -- doesn't see $X$. Amusingly, it turns out that this is the only situation in which Torelli fails  for the tautological bundles associated to  $\OO_X(K_X + (n+1)A)$.

Specifically, we use considerations of Koszul cohomology and Green's $K_{p,1}$ Theorem from \cite{Koszul1} to prove:
\begin{theoremalpha} \label{Intro.Thm.B}
Let $V = \HH{0}{X}{A}$ for a very ample divisor $A$ on $X$, and assume that $A$ does not embed $X$ as a hypersurface in $\PP^{n+1}$. $($In particular, we suppose that $(X, A) \ne (\PP^n, \OO_{\PP^n}(1))$.$)$
\begin{itemize}
\item[$(i).$] If $B = \OO_X(K_X + (n+1)A)$, then $E_{|V|, B}$ determines $X$. \vskip 8pt
\item[$(ii).$] Assume that $\deg_A(X) \ge \dim \HH{0}{X}{A} + 2 -n,$ and that $\HH{1}{X}{\OO_X}= 0$  when $n \ge 2$. Then the same conclusion holds for \[  B = \OO_X(K_X + nA)\] except when $\phi_{|A|}(X) \subseteq \PP(V) $ lies on an $(n+1)$-fold of minimal degree.\end{itemize}
\end{theoremalpha}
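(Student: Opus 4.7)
The plan is to revisit the proof of Theorem~\ref{Intro.Thm.A}, identify precisely which of its cohomological vanishings are actually used, and handle each surviving failure via Green's $K_{p,1}$ theorem from \cite{Koszul1}.

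As in Theorem~\ref{Intro.Thm.A}, from $E = E_{|V|, B}$ one first recovers the multiplication map $\mu \colon U_1 \otimes V \to U_0$, where $U_1 = \HH{0}{X}{B \otimes A^*}$, $V = \HH{0}{X}{A}$, $U_0 = \HH{0}{X}{B}$: the space $V$ is the defining space of $\PP(V)$, $U_0$ is identified with $\HH{0}{\PP(V)}{E}$, $U_1$ is read off from a higher cohomology of a twist of $E$, and $\mu$ is the extension class of the Steiner sequence. The problem is then whether $\mu$ still determines the pair $(X \subseteq \PP(V),\, B)$ when one or more of the vanishings $\HH{i}{X}{B \otimes A^{\otimes -(i+1)}} = 0$ fail. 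Rewriting the argument of Theorem~\ref{Intro.Thm.A} via the standard syzygy–cohomology dictionary exhibits each such failure as a potential contribution to a specific Koszul cohomology group $K_{p,1}(X, A)$, and these are the only obstructions to reconstructing $X$ from $\mu$.

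For part~(i), $B = \OO_X(K_X + (n+1)A)$ gives $B \otimes A^{\otimes -(i+1)} = \OO_X(K_X + (n-i)A)$. Kodaira vanishing handles $0 < i < n$, leaving only $\HH{n}{X}{K_X} \cong \CC$. This one-dimensional defect feeds into a single potential class in $K_{p,1}(X, A)$ at the top of the Green range. Green's $K_{p,1}$ theorem forces any nonzero such class to come from $X$ lying on an $(n+1)$-fold of minimal degree in $\PP(V)$, and a quantitative check — using that the defect is only $\CC$-dimensional, while a genuine $(n+1)$-fold of minimal degree with $\dim V > n+2$ would produce a larger contribution — pins the case down to $\dim V = n+2$ and $X$ a hypersurface in $\PP^{n+1}$, which is excluded. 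Hence the obstruction vanishes and the Theorem~\ref{Intro.Thm.A} reconstruction carries through.

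For part~(ii), $B = \OO_X(K_X + nA)$ potentially fails at $i = n - 1$, with $\HH{n-1}{X}{K_X} \cong \HH{1}{X}{\OO_X}^*$, and at $i = n$, with $\HH{n}{X}{K_X \otimes A^*} \cong V^*$. The hypothesis $\HH{1}{X}{\OO_X} = 0$ for $n \geq 2$ kills the first; the second produces an obstruction in $K_{p,1}(X, A)$ for a specific $p$. The numerical hypothesis $\deg_A(X) \geq \dim \HH{0}{X}{A} + 2 - n$ is exactly the input needed to apply Green's $K_{p,1}$ theorem, which yields the clean dichotomy: either the relevant Koszul group vanishes and the reconstruction succeeds, or $X$ lies on an $(n+1)$-fold of minimal degree — the stated exception. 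The case $n = 1$ proceeds without the assumption $\HH{1}{X}{\OO_X} = 0$, since no $i = n - 1 = 0$ term enters the list of possible failures.

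The main obstacle is in part~(i): translating a one-dimensional cohomological defect into the sharp geometric conclusion that $X$ is a hypersurface in $\PP^{n+1}$, rather than merely lies on some $(n+1)$-fold of minimal degree, requires a quantitative refinement of Green's theorem that controls the dimension of the offending Koszul group in terms of the projective geometry of $X$. Part~(ii) is by comparison cleaner, being essentially a direct application of Green's theorem to the one surviving obstruction.
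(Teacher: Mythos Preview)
Your handling of part~(i) has a genuine gap, and it stems from misidentifying which Koszul group the obstruction lives in. You predict the defect lands in some $K_{p,1}(X,A)$ and therefore reach for Green's $K_{p,1}$ theorem together with an unproved ``quantitative refinement'' to force the hypersurface case. In fact the paper's argument shows that after Serre--Grothendieck duality for Koszul cohomology the obstruction lands in $K_{r-n-1,0}(X;W)$, not in a $K_{p,1}$ group at all. Concretely: failure of surjectivity of $H^0(B\otimes A^{*})\otimes W\to H^0(B)$ with $B=\OO_X(K_X+(n+1)A)$ says exactly that $K_{0,n+1}(X,K_X;W)\ne 0$; duality (with $N=K_X$, $s=r-1$) converts this to $K_{r-n-1,0}(X;W)\ne 0$. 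But $K_{p,0}(X;W)=0$ trivially for every $p>0$ (the Koszul differential $\Lambda^p W\to \Lambda^{p-1}W\otimes H^0(A)$ is injective), and the hypothesis that $X$ is not a hypersurface is precisely $r-n-1>0$. So part~(i) needs neither Green's theorem nor any quantitative refinement of it; your ``main obstacle'' dissolves once you apply duality correctly. Tracking the failure of the Castelnuovo--Mumford vanishings, as you propose, does not by itself produce this clean $K_{p,0}$ statement.

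For part~(ii) your broad outline is closer to the paper's, since here duality genuinely lands in $K_{r-n-1,1}(X;W)$, and Green's theorem is indeed the tool. But two steps are missing from your sketch. First, the duality hypotheses \eqref{Van.for.Duality.Eqn} must be verified; this is where the assumption $H^1(X,\OO_X)=0$ for $n\ge 2$ is actually used (not, as you suggest, to kill a separate obstruction term). Second, Green's theorem is stated for the full linear series $V$, while the nonvanishing you obtain is for the hyperplane $W$; one needs the long exact sequence relating $K_{p,q}(X;W)$ and $K_{p,q}(X;V)$, together with $K_{r-n-1,0}(X;W)=0$, to pass from $K_{r-n-1,1}(X;W)\ne 0$ to $K_{r-n-1,1}(X;V)\ne 0$. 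Only then does Green's theorem apply to give the minimal-degree $(n+1)$-fold.
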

\noi 
(In the exceptional case of (ii), the bundles $E_{|V|, B}$ only depend on the   scroll containing $X$: see Example \ref{Divisors.in.Scrolls.Exampe}.)
Finally, we show   that   these ideas  lead to a new proof of the theorem of Dolgachev and Kapranov.

Our arguments revolve around a strategy pioneered by Vall\`es \cite{Valles1,  Valles2}. His   idea is to study hyperplanes $\linser{W} \subseteq \linser{V}$ for which the restriction $E\, |_{|W|}$ has a trivial quotient, with the aim of recovering $X \subseteq \PP(V)$ as the locus of all such. In \S1, we show that Theorem \ref{Intro.Thm.A}  follows easily   from considerations of Castelnuovo--Mumford regularity. For Theorem \ref{Intro.Thm.B}, in \S 2, we use duality to relate the existence of unstable hyperplanes to the non-vanishing of certain Koszul cohomology groups, where Green's results apply. In the Appendix, we indicate how to recover the result of  Dolgachev--Kapranov.

We work throughout over the complex numbers, although this hypothesis isn't needed for Theorem \ref{Intro.Thm.A}. Given a vector space $V$, $\PP(V)$ denotes the projective space of one-dimensional quotients of $V$, while $\linser{V} = \PP(V^*)$ is used for the space of one-dimensional subspaces.   Somewhat sloppily, we take the liberty of freely confounding divisors and line bundles. We are grateful to Igor Dolgachev for valuable comments.

\numberwithin{equation}{section}

\section{Proof of Theorem \ref{Intro.Thm.A}}

We start by fleshing out the construction of Steiner bundles indicated in the Introduction. Fix a linear map
\begin{equation} \label{Mult.Section.1}  \mu : U_1 \otimes V \lra U_0, \end{equation}
where $U_1, U_0$ and $V$ are finite-dimensional complex vector spaces with $\dim V = r+1$. Composing with the canonical inclusion
$ \OO_{|V|}(-1)  \subseteq   V \otimes_{\CC} \OO_{|V|}$ 
of vector bundles on the projective space $\linser{V}$, $\mu$ gives rise to a morphism  
 \[ \phi:  U_1 \otimes \OO_{|V|}(-1)  \lra  U_0 \otimes \OO_{|V|} \]  of locally free sheaves. Assume now that $\mu(u_1 \otimes v) \ne 0$ for all non-zero vectors $u_1 \in U_1$ and $v \in V$. Then $\phi$ is injective of constant rank, and therefore $E  =  \coker \, \phi$ is a vector bundle that sits in an exact sequence
 \begin{equation} \label{Steiner.Section.1}
 0 \lra U_1 \otimes \OO_{|V|}(-1) \overset{\phi}  \lra  U_0 \otimes \OO_{|V|} \lra E \lra 0. 
 \end{equation}
 We will sometimes write $E = \Steiner(\mu)$ when we wish to emphasize the role of $\mu$. It is elementary that conversely every Steiner bundle $E$ arises in this fashion.

The main example  for our purposes are   tautological bundles associated to a linear system of divisors. Let $X$ be a smooth projective variety of dimension $n$, and let $A$ be a very ample (or at least ample and  basepoint-free) line bundle on $X$. Fix a very ample (or basepoint-free) subspace $V \subseteq \HH{0}{X}{A}$ of dimension $r+1$, and denote by
\[
\mathcal{D} \, \subseteq  \, X \times \linser{V} 
\]
the universal divisor, consisting of pairs $(x, [s])$ such that $s(x) = 0$. It is realized as the zero-locus of a section of $A \boxtimes \OO_{|V|}(1)$. Now consider a line bundle $B$ on $X$ which is sufficiently positive so that $\HH{1}{X}{B \otimes A^*} = 0$, and set 
\[  E_{|V|,B} \ = \ \pro_{2,*} \big ( \pro_{1}^* B \otimes \OO_{\mathcal{D}} \big). \]
This is a vector bundle on $\linser{V}$ whose fibre at $[s]$ is identified with the space  $\HH{0}{X}{B \otimes \OO_{\Div(s)}}$ of sections of the restriction of $B$  to the divisor $\{ s = 0 \}$. 
Starting with the exact sequence
\[
0 \lra \pro_1^* (B\otimes A^*) \otimes \pro_2^* \OO_{|V|}(-1)\overset{\cdot \mathcal{D}} \lra \pro_1^*B \lra \pro_1^* B \otimes \OO_{\mathcal D} \lra 0 
\]
on $X \times \linser{V}$ and pushing forward to $\linser{V}$, one finds:
\begin{lemma}
Assuming always that $\HH{1}{X}{B \otimes A^*} = 0$, $E_{|V|,B}$ is the Steiner bundle on $\linser{V}$ determined by the natural multiplication map
\[
\HH{0}{X}{B \otimes A^*} \otimes V \lra \HH{0}{X}{B}. \ \ \qed
\]
\end{lemma}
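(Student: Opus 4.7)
The plan is to take the displayed short exact sequence on $X \times \linser{V}$ and simply push it forward along $\pro_2$, then verify that the resulting map is the multiplication map. The hypothesis $\HH{1}{X}{B \otimes A^*} = 0$ is tailor-made to keep the pushforward short exact.

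First I would compute each of the three direct images using the projection formula. Since $\pro_2$ is flat and projective with fibres $X$, the projection formula gives
\[
\pro_{2,*}\bigl(\pro_1^*(B \otimes A^*) \otimes \pro_2^* \OO_{|V|}(-1)\bigr) \;=\; \HH{0}{X}{B\otimes A^*} \otimes \OO_{|V|}(-1),
\]
\[
\pro_{2,*}\bigl(\pro_1^* B\bigr) \;=\; \HH{0}{X}{B} \otimes \OO_{|V|},
\]
and the third pushforward is $E_{|V|,B}$ by definition. The key vanishing step is that
\[
R^1\pro_{2,*}\bigl(\pro_1^*(B \otimes A^*) \otimes \pro_2^*\OO_{|V|}(-1)\bigr) \;=\; \HH{1}{X}{B\otimes A^*} \otimes \OO_{|V|}(-1) \;=\; 0
\]
by the hypothesis, so the long exact sequence of higher direct images collapses to a short exact sequence of the required Steiner shape.

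The only remaining point is to identify the resulting map $\HH{0}{X}{B\otimes A^*} \otimes \OO_{|V|}(-1) \to \HH{0}{X}{B} \otimes \OO_{|V|}$ with the multiplication map in the statement. By the tensor-hom adjunction $\Hom(\OO_{|V|}(-1), \OO_{|V|}) = \Gamma(\linser{V}, \OO_{|V|}(1)) = V$, so this morphism of trivial-twist-by-$\OO_{|V|}(-1)$ bundles is the same data as a linear map $\HH{0}{X}{B\otimes A^*} \otimes V \to \HH{0}{X}{B}$. Since the original map on $X \times \linser{V}$ is multiplication by the tautological section of $A \boxtimes \OO_{|V|}(1)$ cutting out $\mathcal{D}$, which corresponds under $\HH{0}{X \times \linser{V}}{A\boxtimes \OO_{|V|}(1)} = \HH{0}{X}{A}\otimes V$ to the inclusion $V \hookrightarrow \HH{0}{X}{A}$, tracing through the adjunction identifies the adjointed map with the multiplication $(t,s) \mapsto t\cdot s$ on global sections.

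I do not expect any serious obstacle: the hypothesis $\HH{1}{X}{B \otimes A^*} = 0$ is precisely what is needed for the pushforward to preserve exactness, and the identification of the boundary map with multiplication is formal from how $\mathcal{D}$ is defined. The only mild bookkeeping is keeping track of the conventions $\linser{V} = \PP(V^*)$ and $\HH{0}{\linser{V}}{\OO_{|V|}(1)} = V$ so that the adjoint of the pushed-forward map lands in the correct space; once these are fixed, the multiplication map falls out directly.
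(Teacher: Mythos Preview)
Your proposal is correct and follows exactly the route indicated in the paper: push forward the short exact sequence on $X \times \linser{V}$ along $\pro_2$, use the projection formula together with the hypothesis $\HH{1}{X}{B\otimes A^*}=0$ to obtain the Steiner presentation, and identify the resulting map with multiplication. The paper states this argument in the sentence preceding the lemma and regards the details (which you have spelled out carefully, including the adjunction bookkeeping) as routine.
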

\noi  We remark that these statements remain true without change if $B$ is a vector bundle of higher rank on $X$.

Returning to the general setting of \eqref{Mult.Section.1}, fix a codimension one subspace $W \subseteq V$  defining a hyperplane $\linser{W} \subseteq \linser{V}$. One says that $\linser{W}$ is an \textit{unstable plane} for the Steiner bundle $E$   if the restriction of $E$ to $\linser{W}$ has a trivial quotient, i.e.\! if
\[
\HH{0}{\linser{W}} { E^*  |_{|W|}} \ \ne \ 0. 
\]
Hyperplanes in $\linser{V}$ correspond to  points in $\PP(V)$, and we define the \textit{Vall\`es locus} of $E$ to be the algebraic subset $\Val(E) \subseteq \PP(V)$ parameterizing all unstable planes.

The following remark is elementary but crucial:
\begin{lemma} \label{Key.Lemma.Section.1}
A hyperplane $W \subseteq V$ corresponds to a point  in the Vall\`es locus of a Steiner bundle $E = \Steiner(\mu)$ if and only if 
the restriction
\[ \mu_{| U_1 \otimes W}:  U_1 \otimes W\lra  U_0 \]
of $\mu$  to $U_1 \otimes W \subseteq U_1 \otimes V$ fails to be surjective. 
\end{lemma}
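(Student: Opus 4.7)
The strategy is to compute $H^0\bigl(\linser{W}, E^*|_{\linser{W}}\bigr)$ directly by dualizing the restricted Steiner sequence and taking cohomology. Since $\linser{W} \subseteq \linser{V}$ is a linear subspace, the pullback of $\OO_{|V|}(1)$ is $\OO_{|W|}(1)$, and restricting \eqref{Steiner.Section.1} to $\linser{W}$ yields the short exact sequence of vector bundles
\[
0 \lra U_1 \otimes \OO_{|W|}(-1) \overset{\phi_W}\lra U_0 \otimes \OO_{|W|} \lra E\,|_{|W|} \lra 0.
\]
Dualizing (which is exact because these are locally free) gives
\[
0 \lra E^*|_{|W|} \lra U_0^* \otimes \OO_{|W|} \lra U_1^* \otimes \OO_{|W|}(1) \lra 0.
\]

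Next I would take global sections on $\linser{W} \cong \PP^{r-1}$. The sequence is left exact in cohomology, so
\[
0 \lra \HH{0}{|W|}{E^*|_{|W|}} \lra U_0^* \lra U_1^* \otimes W^*,
\]
using the identification $\HH{0}{|W|}{\OO_{|W|}(1)} = W^*$ consistent with the paper's convention $|W| = \PP(W^*)$. The main point is to verify that the right-hand map is precisely the transpose $(\mu_{|U_1\otimes W})^{\vee}$ of the restricted multiplication map. This is a bookkeeping exercise: the morphism $\phi$ was built by composing the tautological inclusion $\OO_{|V|}(-1) \hookrightarrow V \otimes \OO_{|V|}$ tensored with $U_1$ with the map $\mu$, so after restriction to $\linser{W}$ it is built from $\mu_{|U_1\otimes W}$ in the same way. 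Dualizing the construction and passing to $H^0$ recovers exactly $\mu_{|U_1 \otimes W}^{\vee} : U_0^* \to U_1^* \otimes W^*$.

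Granting this identification, we conclude $\HH{0}{|W|}{E^*|_{|W|}} = \ker \mu_{|U_1\otimes W}^{\vee} = (\coker \mu_{|U_1\otimes W})^*$. Hence the left-hand space is nonzero exactly when $\mu_{|U_1\otimes W}$ fails to be surjective, which is the desired equivalence.

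I expect the only real obstacle to be the last identification above, namely carefully tracing the linear-algebraic origin of $\phi$ on global sections so that the boundary map on $H^0$ matches $\mu_{|U_1\otimes W}^{\vee}$; once the conventions for $\PP(V)$ versus $\linser{V}$ and for $\OO(1)$ are pinned down, this is routine.
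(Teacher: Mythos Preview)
Your proposal is correct and follows essentially the same approach as the paper: the paper's proof simply records the restricted Steiner sequence on $\linser{W}$ and asserts that $\Hom\bigl(E|_{|W|},\OO_{|W|}\bigr)\cong\bigl(\coker\,\mu_{|U_1\otimes W}\bigr)^*$, which is exactly what you obtain after dualizing, taking global sections, and identifying the induced map with $(\mu_{|U_1\otimes W})^\vee$. You have written out the details the paper leaves implicit, but the argument is the same.
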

\begin{proof}
In fact, one sees using  the exact sequence
\[
 0 \lra U_1 \otimes \OO_{|W|}(-1)   \lra  U_0 \otimes \OO_{|W|} \lra E|_{|W|}\lra 0 \]
that $\Hom ( E_{|W|}, \OO_{|W|}  ) \cong \coker  (\mu_{| U_1 \otimes W})^*$.
\end{proof}

We now move towards the proof of Theorem \ref{Intro.Thm.A}. Let $X$ be a smooth projective variety of dimension $n$, and $A$ a very ample (or ample and globally generated) line bundle on $X$. Fix a line bundle $B$ on $X$ that satisfies the vanishings
\begin{equation} \label{Van.Hypothesis.Section.1}
\HH{i}{X}{B \otimes A^{\otimes{-(i+1)}}} \ = \ 0 \ \ \text{ for } i > 0. 
\end{equation}
This implies that $B$ is very ample (e.g. by \cite[Example 1.8.22]{PAG} or via the arguments below). 

The Theorem is essentially a consequence of the following
\begin{proposition} \label{BPF.Proposition.Section.1}
Keeping the hypotheses just stated, let $U \subseteq \HH{0}{X}{A}$ be a subspace.
\begin{itemize}
\item[$(i)$.] If $U$ is basepoint-free, then the multiplication mapping
\[\mu_U :  \HH{0}{X}{B \otimes A^*} \otimes U \lra \HH{0}{X}{B} \]
is surjective.
\vskip 5pt
\item[(ii).] Suppose that $U$ generates the sheaf $A \otimes \frakm_x$ of sections of $A$ vanishing at some point $x \in X$ $($so that in particular every section in $U$ vanishes at $x)$.  Then 
\[  \image(\mu_U) \, = \, \HH{0}{X}{B \otimes \frakm_x}. \]
\end{itemize}
\end{proposition}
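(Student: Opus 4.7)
My plan is to prove $(i)$ via a direct Koszul cohomology argument and to deduce $(ii)$ by applying $(i)$ on the blowup of $X$ at $x$.

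For $(i)$: since $U$ is basepoint-free and $A$ is a line bundle, the kernel
\[
M_U \;=\; \ker\bigl(U \otimes \OO_X \to A\bigr)
\]
is a vector bundle. Tensoring its defining short exact sequence with $B \otimes A^{-1}$ and taking cohomology, the surjectivity of $\mu_U$ is equivalent to the vanishing $H^1(X, M_U \otimes B \otimes A^{-1}) = 0$. I would establish this by iterating the exterior-power short exact sequences
\[
0 \;\to\; \wedge^{p+1} M_U \;\to\; \wedge^{p+1} U \otimes \OO_X \;\to\; \wedge^p M_U \otimes A \;\to\; 0,
\]
each tensored with $B \otimes A^{-(p+1)}$. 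The hypothesis $H^p(X, B \otimes A^{-(p+1)}) = 0$ kills the middle term in the long exact sequence, producing an injection
\[
H^p(X, \wedge^p M_U \otimes B \otimes A^{-p}) \;\hookrightarrow\; H^{p+1}(X, \wedge^{p+1} M_U \otimes B \otimes A^{-(p+1)})
\]
for each $p \geq 1$. Chaining these up through $p = \dim X$, where the target vanishes for dimensional reasons, forces $H^1(X, M_U \otimes B \otimes A^{-1}) = 0$.

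For $(ii)$: let $\pi : \tilde X \to X$ be the blowup of $X$ at $x$, with exceptional divisor $E$, and set $\tilde A = \pi^* A \otimes \OO_{\tilde X}(-E)$ and $\tilde B = \pi^* B \otimes \OO_{\tilde X}(-E)$. Using $\pi_* \OO_{\tilde X}(-E) = \frakm_x$ and the projection formula, the identification $H^0(\tilde X, \tilde A) = H^0(X, A \otimes \frakm_x)$ translates the hypothesis that $U$ generates $A \otimes \frakm_x$ into the statement that $U \subseteq H^0(\tilde X, \tilde A)$ is basepoint-free. Likewise, $H^0(\tilde X, \tilde B) = H^0(X, B \otimes \frakm_x)$ and $H^0(\tilde X, \tilde B \otimes \tilde A^{-1}) = H^0(\tilde X, \pi^*(B \otimes A^{-1})) = H^0(X, B \otimes A^{-1})$, under which the multiplication on $\tilde X$ coincides with $\mu_U$. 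Thus applying $(i)$ to $(\tilde X, \tilde A, \tilde B, U)$ would immediately deliver $(ii)$, provided the vanishing hypothesis transfers to $\tilde X$.

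The main obstacle is verifying $H^i(\tilde X, \tilde B \otimes \tilde A^{-(i+1)}) = 0$ for $i > 0$. Rewriting $\tilde B \otimes \tilde A^{-(i+1)} = \pi^*(B \otimes A^{-(i+1)}) \otimes \OO_{\tilde X}(iE)$ and invoking the Leray spectral sequence, I would appeal to the standard blowup calculations $\pi_* \OO_{\tilde X}(iE) = \OO_X$ and $R^j \pi_* \OO_{\tilde X}(iE) = 0$ for $j > 0$ and $0 \leq i \leq n-1$; these directly handle the range $1 \leq i \leq n-1$ via the hypothesis on $X$. The case $i = n = \dim X$ is the delicate point: here $R^{n-1} \pi_* \OO_{\tilde X}(nE)$ is a nonzero skyscraper supported at $x$, but its Leray contribution is $H^1$ of a skyscraper and therefore vanishes, while the $j = 0$ piece reduces to $H^n(X, B \otimes A^{-(n+1)}) = 0$ from the hypothesis. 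Together with the obvious containment $\image(\mu_U) \subseteq H^0(X, B \otimes \frakm_x)$, this completes $(ii)$.
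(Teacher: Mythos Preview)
Your argument is correct. Part (i) is essentially the paper's proof: the paper phrases it via the full Koszul resolution of $\OO_X$ tensored with $B$, while you break this into the short exact sequences governed by $M_U$ and its exterior powers, but this is exactly the ``diagram chase'' the paper alludes to.

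For part (ii) the two arguments genuinely diverge. The paper stays on $X$: from the surjection $U\otimes A^*\twoheadrightarrow \frakm_x$ one gets a Koszul-type complex that is no longer exact, but whose homology sheaves are supported at the single point $x$; a chase then shows these skyscraper contributions do not obstruct the surjectivity of $H^0(U\otimes B\otimes A^*)\to H^0(B\otimes\frakm_x)$. Your approach instead passes to the blowup $\pi:\tilde X\to X$ and reduces (ii) to an application of (i) for the pair $(\tilde A,\tilde B)$, the point being that $U$ becomes honestly basepoint-free upstairs. Both routes ultimately absorb the same obstruction --- the paper sees it as cohomology of skyscrapers on $X$, you see it as the skyscraper $R^{n-1}\pi_*\OO_{\tilde X}(nE)$ appearing in Leray --- and both kill it for the same trivial reason. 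Two small remarks: first, when you invoke (i) on $\tilde X$ note that $\tilde A$ need not be ample, but the proof of (i) only uses that $U$ is basepoint-free together with the vanishing hypothesis, so this is harmless; second, the case $n=1$ (where the blowup is an isomorphism and $E=x$ is already a Cartier divisor) should strictly be handled by the direct computation $H^1(B\otimes A^{-2}(x))=0$, which follows at once from $H^1(B\otimes A^{-2})=0$ and the exact sequence for $\OO_X\hookrightarrow\OO_X(x)$. The paper's approach is marginally more direct and adapts more readily if one replaces the point $x$ by a higher-dimensional subscheme; your reduction to (i) is cleaner conceptually and avoids tracking a non-exact complex.
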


\begin{proof}
This follows from the theory of Castelnuovo--Mumford regularity (cf \cite[Section 1.8]{PAG}), but for the convenience of the reader we sketch briefly the argument. Assuming $U$ is basepoint-free, it determines a surjective mapping $U \otimes A^* \lra \OO_X $ of bundles on $X$. The resulting Koszul complex yields  a long exact sequence
\[  \ldots   \lra \Lambda^3 U \otimes A^{\otimes -3}\lra \Lambda^2 U \otimes A^{\otimes -2} \lra U \otimes A^* \lra \OO_X \lra 0  \]
of vector bundles on $X$. Tensoring through by $B$ and taking cohomology, the hypothesis \eqref{Van.Hypothesis.Section.1} implies with a diagram chase that the map
\[ \HH{0}{X}{U \otimes B \otimes A^*} \lra \HH{0}{X}{B} \]
is surjective. This proves (i). In the setting of (ii), $U \otimes A^*$ maps onto $\frakm_x$, and now one arrives at a complex having the shape:\[
\ldots   \lra \Lambda^3 U   \otimes A^{\otimes -3}\lra \Lambda^2 U  \otimes A^{\otimes -2} \lra U \otimes   A^* \overset{\eps} \lra  \frakm_x \lra 0,\]
with $\eps$ surjective. This complex is not exact, but its homology sheaves are supported at the point $x$, and another diagram chase shows that this suffices to conclude the surjectivity of \[ \HH{0}{X}{U \otimes B \otimes A^*} \lra \HH{0}{X}{B \otimes \frakm_x}.\] We refer to \cite[Example B.1.3]{PAG} for more details.
\end{proof}

\begin{proof} [Proof of Theorem \ref{Intro.Thm.A}]  Assume that $V \subseteq \HH{0}{X}{A}$ is very ample, and that \eqref{Van.Hypothesis.Section.1} holds. We assert to begin with that
\[   \Val(E_{|V|, B})\ = \ \phi_{|V|}(X)  \ \subseteq \ \PP(V). \tag{*} \]
In fact, by definition points in the image of $\phi_{|V|}$ correspond to hyperplanes $W \subset V$ consisting of sections that vanish at a fixed point $x\in X$. On the other hand,  the preceeding Proposition shows that  these are precisely the hyperplanes $W$ for which
\[  \HH{0}{X}{B \otimes A^*} \otimes W \lra \HH{0}{X}{B} \]
fails to be surjective. So (*) follows from Lemma \ref{Key.Lemma.Section.1}.  It remains to show that one can recover the line bundle $B$ from $E_{|V|,B}$. Suppose then that $W \subseteq V$ is a hyperplane corresponding to a point $\phi_{|V|}(x) \in \PP(V)$.  Proposition \ref{BPF.Proposition.Section.1} (ii) and (the proof of) Lemma  \ref{Key.Lemma.Section.1} imply 
 that $E|_{|W|}$ has a unique trivial quotient. Via the isomorphism
\[  \HH{0}{X}{B} \, = \,   \HH{0}{\linser{W}}{E|_{|W|} }\]
this determines a one-dimensional   quotient of $H^0(B)$.  One verifies that this is $\phi_{|B|}(x) \in \PP H^0(B)$, and therefore one can reconstruct from $E$ the embedding defined by $B$, as claimed. 
\end{proof}

\begin{remark}
We leave it to the reader to check that the equality 
\[  \Val(E_{|V|,B}) = \phi_{|V|}(X) \subseteq \PP(V)\]
 still holds  assuming only that  $V \subseteq \HH{0}{X}{A}$ is  basepoint-free. We note also that everything we have said goes through with only  evident minor changes if $B$ is a vector bundle of higher rank, provided of course that \eqref{Van.Hypothesis.Section.1}
still holds. \qed
\end{remark}

\section{Proof of Theorem \ref{Intro.Thm.B}}

Theorem \ref{Intro.Thm.B} draws on some ideas and results concerning Koszul cohomology. We start by recalling  the requisite definitions and facts.
 
Let $X$ be a smooth complex projective variety of dimension $n$, $A$ a very ample line bundle on $X$, and $N$ an arbitrary line bundle. Fix a basepoint-free subspace $U \subseteq \HH{0}{X}{A}$. For every $p, q  \ge 0$ one can form the Koszul-type   complex

\vskip -15pt
\footnotesize
\[
\ldots\lra  \Lambda^{p+1}U \otimes H^0(N \otimes A^{\otimes (q-1)}) \lra \Lambda^{p}U \otimes H^0(N \otimes A^{\otimes q}) \lra \Lambda^{p-1}U \otimes H^0(N \otimes A^{\otimes (q+1)}) \lra \ldots .
\]
\normalsize

\vskip -5pt
\noi  
The cohomology of this complex is denoted by $$K_{p,q}(X, N; U),$$   the line bundle $A$ being understood; when $N = \OO_X$ we write simply $K_{p,q}(X; U)$.  These vector spaces control the  minimal free resolution of $\oplus_k\,  \HH{0}{X}{N \otimes A^{\otimes k}}$ viewed as a graded module over the symmetric algebra $\Sym(U)$, but  we do not draw on this interpretation. However  this is a very rich and well-developed story, and we refer for instance to \cite{Koszul1, AproduNagel, LSGAV} for introductions.

Theorem \ref{Intro.Thm.B} is a simple consequence of several observations and results about these groups. The first follows immediately from the definitions: 
\begin{lemma} \label{Elem.Koszul.Lemma|} Keep notation as above.
\begin{itemize}
\item[$(i)$.] If $p > 0$, then $K_{p,0}(X; U) = 0$.
\vskip 7pt
\item[$(ii)$.] Suppose that $W \subseteq U \subseteq \HH{0}{X}{A}$ is a codimension one basepoint-free subspace of $U$. Then there is a long exact sequence
\vskip -5pt
\footnotesize
\[  
\ldots \lra K_{p,q-1}(X,N;W) \lra K_{p,q}(X, N; W) \lra K_{p,q}(X,N; U) \lra K_{p-1,q}(X,N;W) \lra \ldots  \]
\normalsize
\end{itemize}
\end{lemma}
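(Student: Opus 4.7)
For part (i), my plan is to exploit that very ampleness of $A$ on the projective variety $X$ forces $\HH{0}{X}{A^{\otimes -1}} = 0$, so the Koszul complex at the position computing $K_{p,0}(X; U)$ reduces to
\[
0 \lra \Lambda^p U \otimes \HH{0}{X}{\OO_X} \lra \Lambda^{p-1} U \otimes \HH{0}{X}{A}.
\]
Identifying $\HH{0}{X}{\OO_X} \cong \CC$, this differential factors as the purely linear-algebraic Koszul map $\delta : \Lambda^p U \to \Lambda^{p-1} U \otimes U$ followed by the injection induced from $U \hookrightarrow \HH{0}{X}{A}$, so it is enough to see that $\delta$ is injective for $p \ge 1$. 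The composition of $\delta$ with the wedge multiplication $\Lambda^{p-1} U \otimes U \to \Lambda^p U$ equals multiplication by $\pm p$, providing a left inverse over $\CC$.

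For part (ii), I would choose a splitting $U = W \oplus \CC \cdot v$ with $v \notin W$. Wedging with $v$ yields, for every $p$, a short exact sequence
\[
0 \lra \Lambda^p W \lra \Lambda^p U \lra \Lambda^{p-1} W \lra 0
\]
in which the surjection extracts the coefficient of $v$. Tensoring with $\HH{0}{X}{N \otimes A^{\otimes q}}$ and assembling across $q$ promotes this to a short exact sequence of complexes
\[
0 \lra K_\bullet(W) \lra K_\bullet(U) \lra Q_\bullet \lra 0.
\]
The Koszul differential preserves the $\Lambda^\bullet W$-subcomplex because multiplication by sections of $W$ keeps $\Lambda^\bullet W$ inside itself. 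To identify the quotient $Q_\bullet$, I would compute the induced differential on a lift $\omega \wedge v \otimes s$ with $\omega \in \Lambda^{p-1} W$: the term proportional to $\omega \otimes (v \cdot s)$ lies in $\Lambda^{p-1} W$ and dies in the quotient, while the remaining terms project to the Koszul differential of $W$ applied to $\omega \otimes s$. This identifies $Q_\bullet$ with a reindexed copy of the Koszul complex for $W$ whose position carrying $\Lambda^{p-1} W \otimes \HH{0}{X}{N \otimes A^{\otimes q}}$ has cohomology $K_{p-1, q}(X, N; W)$.

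The long exact sequence in cohomology of the short exact sequence of complexes then produces the sequence claimed in (ii). The main obstacle I anticipate is simply careful bookkeeping of the bigrading: one must verify that the connecting homomorphism emerging from $K_{p-1, q}(X, N; W) = H^\bullet(Q_\bullet)$ lands in $K_{p-1, q+1}(X, N; W)$, matching the shift by $(-1,+1)$ produced by the Koszul differential on $W$. Once the identification of $Q_\bullet$ is in hand, this is a routine check.
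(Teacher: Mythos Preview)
Your proof is correct and follows essentially the same approach as the paper's. The paper's argument is terser---it simply asserts that $\Lambda^p U \to \Lambda^{p-1} U \otimes H^0(A)$ is injective for $p>0$ and invokes the short exact sequence $0 \to \Lambda^p W \to \Lambda^p U \to \Lambda^{p-1} W \to 0$---while you have supplied the details (the vanishing of $H^0(A^{-1})$, the retraction via wedge multiplication, and the identification of the induced differential on the quotient complex), but the underlying strategy is identical.
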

\begin{proof}
 The map $\Lambda^p U \lra \Lambda^{p-1}{U} \otimes H^0(A)$   is injective provided that $p > 0$, and by definition  $K_{p,0}(X; U)$ is its kernel. For (ii), the   exact sequence of vector spaces
\[
0 \lra \Lambda^p W \lra \Lambda^p U  \lra \Lambda^{p-1} W \lra 0
\]
determines a short exact sequence of the complexes computing Koszul cohomology. This gives rise to the stated long exact sequence  of cohomology groups. 
\end{proof}

The next point is Serre--Grothendieck duality for Koszul groups:
\begin{proposition} \label{Duality.for.Koszul.Groups}
Set $s = \dim\linser{U}$, and assume that \begin{equation} \label{Van.for.Duality.Eqn}
\HH{i}{X}{N \otimes A^{\otimes{(q-i)} }} \ = \ \HH{i}{X}{N \otimes A^{\otimes{(q-i-1)} }} 
\ = \ 0 \ \ \text{for \ } \ 0 < i < n.
\end{equation}
Then there are isomorphisms
\[
K_{p,q}\big(X,N; U\big) \ \cong \ K_{s-n-p, n+1-q}\big( X, \omega_X\otimes N^* ; U\big)^*,\]
where $\omega_X = \OO_X(K_X)$ is the canonical bundle of $X$. 
\end{proposition}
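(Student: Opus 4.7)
The plan is to translate the Koszul cohomology groups on both sides of the asserted isomorphism into sheaf cohomology on $X$ via the syzygy bundle $M = M_U = \ker(U \otimes \OO_X \twoheadrightarrow A)$, and then to match them using Serre duality on $X$ together with the self-duality $\Lambda^k M^* \cong \Lambda^{s-k} M \otimes A$ that follows from $\det M \cong A^{-1}$.

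The first step is to use the short exact sequence
\[
0 \to \Lambda^{k+1} M \to \Lambda^{k+1} U \otimes \OO_X \to \Lambda^k M \otimes A \to 0,
\]
obtained by taking exterior powers of $0 \to M \to U \otimes \OO_X \to A \to 0$, together with the vanishing hypothesis \eqref{Van.for.Duality.Eqn}, to establish a geometric interpretation
\[
K_{p,q}(X, N; U) \ \cong \ \HH{n-1}{X}{\Lambda^{p+n-1} M \otimes N \otimes A^{q-n+1}}.
\]
The base case $K_{p,q}(X, N; U) \cong \HH{1}{X}{\Lambda^{p+1} M \otimes N \otimes A^{q-1}}$ follows from a direct diagram chase between the two twists of the displayed SES by $N \otimes A^{q-1}$ and $N \otimes A^{q}$, using only $\HH{1}{X}{N \otimes A^{q-1}} = 0$. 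Each further shift $\HH{m}{X}{\Lambda^{p+m} M \otimes N \otimes A^{q-m}} \cong \HH{m+1}{X}{\Lambda^{p+m+1} M \otimes N \otimes A^{q-m-1}}$ comes from the long exact sequence of an appropriate twist of the SES, and the pair of vanishings required at each stage (namely $\HH{m}{X}{N \otimes A^{q-m-1}}=\HH{m+1}{X}{N \otimes A^{q-m-1}}=0$) is furnished by \eqref{Van.for.Duality.Eqn} for $m = 1, \dots, n-2$.

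Next, I apply Serre duality on $X$ combined with the identification $\Lambda^{p+n-1} M^* \cong \Lambda^{s-p-n+1} M \otimes A$ to arrive at
\[
K_{p,q}(X, N; U)^* \ \cong \ \HH{1}{X}{\Lambda^{s-n-p+1} M \otimes \omega_X \otimes N^* \otimes A^{n-q}}.
\]
Applying the base case of Step 1 in reverse to the line bundle $\omega_X \otimes N^*$ at indices $p' = s-n-p$, $q' = n+1-q$ then recognises the right-hand side as $K_{s-n-p, n+1-q}(X, \omega_X \otimes N^*; U)$, yielding the claimed duality.

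The key structural observation that makes the reverse identification valid is that \eqref{Van.for.Duality.Eqn} is self-dual under Serre duality: the prescribed vanishings for $N$ at degrees $q-i$ and $q-i-1$ translate via $\HH{i}{X}{L} \cong \HH{n-i}{X}{\omega_X \otimes L^*}^*$ into exactly the analogous vanishings for $\omega_X \otimes N^*$ at the shifted index $q' = n+1-q$. The main obstacle I anticipate is the careful bookkeeping of indices through the $n-2$ iterated shifts in Step 1 and through the reverse identification on the dual side; the dual form of \eqref{Van.for.Duality.Eqn}, covering both $\HH{i}{X}{N \otimes A^{q-i}} = 0$ and $\HH{i}{X}{N \otimes A^{q-i-1}} = 0$ for $0 < i < n$, is tuned precisely so that each pair of vanishings is in place when needed.
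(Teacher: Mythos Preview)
The paper does not give its own proof of this proposition; it simply cites Green \cite[Theorem 2.c.6]{Koszul1} and Aprodu--Nagel \cite[Remark 2.26]{AproduNagel}. Your argument is exactly the standard proof one finds in those references: identify $K_{p,q}$ with cohomology of wedge powers of the kernel bundle $M_U$, climb from $H^1$ to $H^{n-1}$ using the vanishing hypothesis, apply Serre duality together with $\Lambda^k M^* \cong \Lambda^{s-k}M\otimes A$, and recognise the result as the dual Koszul group. The bookkeeping you sketch is correct for $n\ge 2$: the base case uses $H^1(N\otimes A^{q-1})=0$ (the hypothesis at $i=1$), each shift for $m=1,\dots,n-2$ uses the pair $H^m(N\otimes A^{q-m-1})=H^{m+1}(N\otimes A^{q-m-1})=0$, and the reverse base case on the dual side uses $H^{n-1}(N\otimes A^{q-n})=0$ via Serre duality.

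One small caveat: your displayed formula $K_{p,q}\cong H^{n-1}(\Lambda^{p+n-1}M\otimes N\otimes A^{q-n+1})$ and the shifting scheme tacitly assume $n\ge 2$. When $n=1$ the hypothesis \eqref{Van.for.Duality.Eqn} is vacuous, so neither the base-case vanishing $H^1(N\otimes A^{q-1})=0$ nor any shift is available; in that case one argues directly by dualising the cokernel description of $K_{p,q}$ against the kernel description of the dual group (both of which hold without any vanishing), rather than passing through an intermediate $H^1$. This is a harmless edge case and does not affect the applications in the paper, all of which invoke the proposition with $n\ge 2$ or with Kodaira vanishing supplying more than enough.
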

\noi This is established in  \cite[Theorem 2.c.6]{Koszul1} or \cite[Remark 2.26]{AproduNagel}.

The most interesting input to Theorem \ref{Intro.Thm.B} is: 
\begin{theorem}[Green's $K_{p,1}$ Theorem]  \label{Green.Kp1.Thm}
Set $V = \HH{0}{X}{A}$,  put $r = \dim \linser{V}$, and assume that $\deg_A(X)  \ge r - n + 3$. Then
\[ K_{r-n-1,1}(X; V) \ \ne \ 0 \]
if and only if $\phi_{|A|}(X) \subseteq \PP^r$ lies on an $(n+1)$-fold of minimal degree. 
\end{theorem}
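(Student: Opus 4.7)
The plan is to prove the two implications of the biconditional separately: the sufficiency direction will follow from the structure theory of varieties of minimal degree, while necessity requires constructing a containing variety of minimal degree out of a non-zero Koszul class, and this will be the hard part.

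For sufficiency, suppose $X \subseteq Y \subseteq \PP(V^*) = \PP^r$ with $Y$ an $(n+1)$-fold of minimal degree $r-n$. By the Del Pezzo--Bertini classification, $Y$ is a linear space, a quadric, a rational normal scroll (or a cone over one), or the Veronese surface, and in every case the homogeneous ideal $I_Y \subseteq \Sym(V)$ carries a pure linear Eagon--Northcott-type resolution of length $r-n-1$. This immediately yields a non-zero class in $K_{r-n-1,1}(\PP^r, I_Y)$, equivalently a class supported on $Y$. I would then push this class through the inclusion $I_Y \hookrightarrow I_X$ into $K_{r-n-1,1}(X; V)$ and verify non-vanishing by a Hilbert-function comparison in low degree; the hypothesis $\deg_A(X) \ge r-n+3$ enters here to rule out coincidental Koszul relations on $X$ that might kill the class.

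For necessity, starting from a non-zero $\alpha \in K_{r-n-1,1}(X;V)$, I would construct the \emph{syzygy scheme} $Y_\alpha \subseteq \PP^r$ in the style of Green. A cocycle representative of $\alpha$ lies in $\Lambda^{r-n-1}V \otimes H^0(A)$; contracting with elements of $\Lambda^{r-n-2}V^*$ produces a linear family of elements in $V \otimes H^0(A)$ whose images in $H^0(A^{\otimes 2})$ vanish (this is exactly the cocycle condition), so each contraction gives a quadric lying in $I_X \subseteq \Sym^2 V$. Defining $Y_\alpha$ as the common zero locus of these quadrics, one has $X \subseteq Y_\alpha$ by construction. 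The content of the theorem is then that $\dim Y_\alpha = n+1$ and $\deg Y_\alpha = r-n$; since every non-degenerate irreducible variety satisfies $\deg \ge \codim + 1$, an upper bound $\deg Y_\alpha \le r-n$ forces equality and hence minimal degree.

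The main obstacle is precisely this degree bound $\deg Y_\alpha \le r-n$. My strategy would be a Hilbert-function comparison: the $\binom{r-n-1}{1}$ independent contractions of $\alpha$ produce enough quadrics in $I_{Y_\alpha}$ that $h_{Y_\alpha}(k)$ is forced to agree in low degree with the Hilbert function of a variety of minimal degree, and the assumption $\deg_A(X) \ge r-n+3$ propagates this agreement far enough to pin down $\deg Y_\alpha$. As a fallback, if this direct analysis becomes unmanageable, I would apply the Serre--Grothendieck duality of Proposition \ref{Duality.for.Koszul.Groups} to rewrite $K_{r-n-1,1}(X;V) \cong K_{1,n}(X, \omega_X; V)^*$ (whenever the requisite intermediate vanishings hold), reducing the question to the structure of very low-weight syzygies twisted by $\omega_X$, where a ``single linear relation $\Rightarrow$ variety of minimal degree'' statement can be extracted more transparently.
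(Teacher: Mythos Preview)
The paper does not prove this statement at all: immediately after Theorem~\ref{Green.Kp1.Thm} the authors simply write ``This is \cite[Theorem 3.c.1]{Koszul1}; see also \cite[Chapter 3]{AproduNagel} for a somewhat different approach,'' and then move on. Green's $K_{p,1}$ theorem is quoted as a black-box input to the proof of Theorem~\ref{Intro.Thm.B}, not re-proved. So there is no ``paper's own proof'' to compare your proposal against.

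That said, your outline is broadly faithful to how Green's original argument in \cite{Koszul1} proceeds: the sufficiency direction does come from the linear (Eagon--Northcott) resolution of a variety of minimal degree, and the necessity direction is indeed handled by associating to a non-zero Koszul class its syzygy scheme and then bounding the degree of that scheme. Your sketch of the degree bound (``Hilbert-function comparison'' from the contracted quadrics) is where the real work lies, and as written it is more of an aspiration than an argument; Green's actual proof at this step is considerably more delicate, passing through the ``strong Castelnuovo lemma'' and a careful analysis of what a class in $K_{p,1}$ with $p$ near the top forces on the quadratic part of the ideal. The fallback via duality you mention is closer in spirit to the Aprodu--Nagel treatment. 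If you intend to supply a full proof rather than cite the result, you should be aware that filling in the ``main obstacle'' paragraph is the entire content of the theorem, and the sketch you give does not yet contain the key mechanism.
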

\noi This is   \cite[Theorem 3.c.1]{Koszul1}; see also \cite[Chapter 3]{AproduNagel}  for a somewhat different approach.

With these preliminaries out of the way we return to the setting of Theorem \ref{Intro.Thm.B}. As before $X$ is a smooth complex projective variety of dimension $n$, and $A$ is a very ample line bundle on $X$. We will work with the full space of sections $V = \HH{0}{X}{A}$, and following traditional notation  we  write $\linser{A}$  for the corresponding complete linear series, i.e.\! $\linser{A} = \linser{ \HH{0}{X}{A}}$. We put $r = \dim \linser{A} = \hh{0}{X}{A}-1$.

Our goal  is to study the Torelli problem for the tautological Steiner bundles $E_{|A|,B}$   when 
\begin{equation} \label{Possibilities.for.B}
 B  \, = \, K_X + (n+1)A \ \ \text{or } \ \ B \, = \, K_X + nA. \end{equation}
The first point to observe is that these divisors are sufficiently positive to guarantee that every point of $X \subseteq \PP^r$ gives rise to an unstable plane.   \begin{lemma} Assume that $(X, A) \ne \big(\PP^n, \OO_{\PP^n}(1)\big)$. Then the divisor $K_X + (n+1)A$ is very ample and $K_X + nA$ is basepoint-free. Consequently, with $B$ as in \eqref{Possibilities.for.B} one has
\[
 \phi_{|A|}(X) \ \subseteq \ \Val( E_{|A|,B} ) \ \subseteq \ \PP \HH{0}{X}{A}. 
 \]
 \end{lemma}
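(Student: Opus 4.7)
The plan is to handle the two assertions in sequence. For the positivity statements, I would invoke classical adjunction-theoretic results: when $A$ is very ample on a smooth projective $n$-fold $X$ with $(X,A) \ne (\PP^n, \OO_{\PP^n}(1))$, the divisor $K_X + (n+1)A$ is very ample and $K_X + nA$ is basepoint-free. These can be cited directly from the adjunction literature — for instance Beltrametti-Sommese's monograph or Ein-Lazarsfeld's work on effective adjunction; the only excluded case is the projective-space pair, where $K_X + (n+1)A = \OO_X$ is globally generated but not very ample and $K_X + nA = \OO_X(-1)$ has no sections at all.

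For the Vall\`es inclusion, I would apply Lemma \ref{Key.Lemma.Section.1} pointwise. First, Kodaira vanishing yields $H^1(X, B \otimes A^*) = 0$ in the relevant cases (it amounts to $H^1(K_X + nA) = 0$ or $H^1(K_X + (n-1)A) = 0$), so $E_{|A|,B}$ is the Steiner bundle determined by the multiplication map $\mu : H^0(X, B \otimes A^*) \otimes V \to H^0(X, B)$, with $V = H^0(X, A)$. Fix $x \in X$ and let $W_x = H^0(X, A \otimes \frakm_x) \subset V$ be the hyperplane of sections of $A$ vanishing at $x$; this is the codimension-one subspace whose associated hyperplane $\linser{W_x} \subset \linser{V}$ corresponds to the point $\phi_{|A|}(x) \in \PP(V)$. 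Every element in the image of $\mu$ restricted to $H^0(B \otimes A^*) \otimes W_x$ is a product $s \cdot t$ with $t \in W_x$, hence vanishes at $x$, and so lies in $H^0(X, B \otimes \frakm_x)$. Since the first part of the lemma guarantees that $B$ is basepoint-free, $H^0(X, B \otimes \frakm_x) \subsetneq H^0(X, B)$ is a strict inclusion, and therefore $\mu$ restricted to $H^0(B \otimes A^*) \otimes W_x$ is not surjective. Lemma \ref{Key.Lemma.Section.1} then identifies $\linser{W_x}$ as an unstable plane for $E_{|A|,B}$, giving $\phi_{|A|}(x) \in \Val(E_{|A|,B})$; the containment $\Val(E_{|A|,B}) \subseteq \PP H^0(X, A)$ is tautological from the definition of the Vall\`es locus.

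The principal difficulty is the positivity input from adjunction theory; once that is in hand, the Vall\`es inclusion is a mechanical consequence of Lemma \ref{Key.Lemma.Section.1} combined with the basepoint-freeness of $B$ applied at each point $x$ of $X$.
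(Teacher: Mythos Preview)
Your proposal is correct and follows essentially the same route as the paper: the positivity of $K_X + (n+1)A$ and $K_X + nA$ is taken as a classical adjunction-theoretic input (the paper cites \cite[Lemma 2]{Ein.Ramification} specifically), and the Vall\`es inclusion is deduced exactly as you describe, by observing that when $B$ is globally generated the image of $\mu$ restricted to $H^0(B\otimes A^*)\otimes W_x$ lands in $H^0(B\otimes\frakm_x)\subsetneq H^0(B)$, so Lemma~\ref{Key.Lemma.Section.1} applies. Your extra remark that Kodaira vanishing ensures $H^1(B\otimes A^*)=0$ (so that $E_{|A|,B}$ really is Steiner) is a helpful clarification the paper leaves implicit.
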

 \begin{proof} The first assertion  is established in \cite[Lemma 2]{Ein.Ramification}.
 But when $B$ is globally generated and $W \subseteq \HH{0}{X}{A}$ consists of sections vanishing at some point $x \in X$, then $$\HH{0}{X}{B \otimes A^*} \otimes W \lra \HH{0}{X}{B}$$ cannot be surjective.
 \end{proof}

\begin{proof}[Proof of Theorem \ref{Intro.Thm.B}]
With $B$ as in \eqref{Possibilities.for.B}, we want to check that in fact
\[
\phi_{|A|}(X) \ = \ \Val( E_{|A|,B} ). 
\]
Let $W \subseteq \HH{0}{X}{A}$ be a basepoint-free subspace of codimension one (hence $\dim_\CC W = r$). Thanks to Lemma \ref{Key.Lemma.Section.1}, the issue is to show that multiplication
\[
\HH{0}{X}{B \otimes A^*} \otimes W \lra \HH{0}{X}{B}  \tag{*}
\]
is surjective except in the excluded cases. 

Suppose first that $B = \OO_X(K_X + (n+1)A)$ and that (*) is not surjective. By definition, this means that
\[  K_{0, n+1}(X, K_X; W) \ \ne \ 0. \]
We now apply Proposition \ref{Duality.for.Koszul.Groups} with $N = K_X$ and $s = r-1$. The hypotheses \eqref{Van.for.Duality.Eqn} follow from Kodaira vanishing, and we conclude that
\[
K_{r-n-1, 0}(X; W) \ \ne \ 0. 
\]
But $r - n -1 > 0$ since we assume that $X$ isn't a hypersurface, and we arrive at a contradiction to Lemma \ref{Elem.Koszul.Lemma|} (i).

The argument when $B = \OO_X(K_X + nA)$ proceeds along similar lines. Assume that (*) fails to be surjective. Duality applies thanks  to Kodaira and the assumption that $\HH{1}{X}{\OO_X} = 0$ when $n \ge 2$, and therefore
$K_{r-n-1,1}(X; W) \ne 0$. Using again that  $K_{r-n-1,0}(X; W) = 0$,  we conclude from the exact sequence in  Lemma \ref{Elem.Koszul.Lemma|}  that $K_{r-n-1, 1}(X; V) \ne 0$.  But then Green's Theorem \ref{Green.Kp1.Thm} implies that $X \subseteq \PP^r$ lies on an $(n+1)$-fold of minimal degree. 
\end{proof}

\begin{example}[Divisors in scrolls] 
\label{Divisors.in.Scrolls.Exampe}
To complete the picture we analyze the exceptional case in (ii) when $X \subseteq \PP^r$ sits as a divisor in an $(n+1)$-fold of minimal degree $Y \subseteq \PP^r$. For simplicity assume that $Y$ is smooth, so that $Y = \PP(Q)$ is the total space of an ample vector bundle $Q$ of rank $(n+1)$ on $\PP^1$. Write $q = \deg Q$, and denote by $H$ and $F$ respectively the classes of $\OO_{\PP(Q)}(1)$ and a fibre, so that $A = \OO_X(H)$. Then   $X \lin dH + eF $ for some integers $d \ge 2$ and $e$. Recalling that $K_Y \lin -(n+1)H + (q-2)F$, we see by adjunction that 
\[
\OO_X(K_X + nA) \ = \ \OO_X\big( (d-1)H + (e + q - 2)F \big).
\]
The coefficient of $H$ here being $< d$, this implies that
\begin{align*}
\HH{0}{X}{\OO_X(K_X + (n-1)A)} \ &= \ \HH{0}{Y}{\OO_Y\big( (d-2)H + (e + q - 2)F \big)} \\
\HH{0}{X}{\OO_X(K_X + nA)} \ &= \ \HH{0}{Y}{\OO_Y\big( (d-1)H + (e + q - 2)F \big)}.\end{align*}
Consequently the Steiner bundle $E_{|A|, B}$ doesn't vary with $X$. 
\end{example}

\appendix
\section{The theorem of Dolgachev--Kapranov}  

Let $X \subseteq \PP(V) = \PP^r $ be a finite set of $d \ge r+1$ points in linear general position, and denote by $\II = \II_X \subseteq \OO_{\PP(V)}$ the ideal sheaf of $X$. Green shows in \cite[Theorem (3.c.6)]{Koszul1} that $K_{r-2, 2}(\PP^r, \II; V) \ne 0$ if and only if $X$ lies  on a rational normal curve.\footnote{The statement in \cite{Koszul1} actually involves $K_{r-1,1}$ of the homogeneous   coordinate ring of $X$, but this is isomorphic to the stated group. See also \cite[Lemma 3.29]{AproduNagel} for another exposition. }  Given the arguments from the previous section, it is natural to expect that one can use this to get a new proof of the Torelli-type theorem of Dolgachev and Kapranov from \cite{Dolgachev.Kapranov} (along with the numerical improvements by Vall\`es \cite{Valles1}).     Inspired by some of the techniques in \cite{Koszul1}, we indicate here how this goes. For simplicity we assume that $r \ge 3$.

 Note to begin with that  each 
 $
 \HHstar{i} {\PP(V)}{\II} =_{\text{def}} \oplus_k\,  \HH{i}{\PP(V)}{\II(k)}
 $
 is a graded module over the symmetric algebra $\Sym(V)$. In particular, there is a natural map
\begin{equation} \label{H1.Module.Equation}
\HH{1}{\PP(V)}{\II} \otimes V \lra  \HH{1}{\PP(V)}{\II(1)} 
\end{equation}
On the other hand, every point $x \in  X\subseteq \PP(V)$ determines a dual hyperplane $H \subseteq \linser{V}$, and so $X$ itself gives rise to a normal crossing hyperplane arrangement $\Sigma H_i$ on $\linser{V}$. One checks that the Dolgachev--Kapranov bundle $E = \Omega_{\linser{V}}^1(\log \Sigma H_i)$ is the Steiner bundle on $\linser{V}$ determined by the multiplication map
 \[
 \HH{1}{\PP(V)}{\II(1)}^* \otimes V \lra  \HH{1}{\PP(V)}{\II}^* ,
 \]
deduced from \eqref{H1.Module.Equation}.  It is elementary that $X \subseteq \Val(E)$, and we want to verify
\begin{proposition} \label{Proposition.Appendix}
If $X \subsetneqq \Val(E)$, then $X$ lies on a rational normal curve in $\PP({V})$. 
\end{proposition}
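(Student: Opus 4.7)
The plan is to adapt the Koszul-cohomological strategy of Section 2: I will extract from the unstable datum a nonzero class in $K_{r-2,2}(\PP(V), \II; V)$, and then invoke Green's $K_{r-2,2}$ criterion (quoted from \cite{Koszul1} at the start of the Appendix) to conclude that $X$ lies on a rational normal curve.

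First, I unpack the hypothesis via Lemma \ref{Key.Lemma.Section.1}: a point $p \in \Val(E) \setminus X$ corresponds to the hyperplane $W \subseteq V$ of linear forms vanishing at $p$, and unstability becomes the non-surjectivity of the restricted multiplication $\HH{1}{\PP(V)}{\II(1)}^* \otimes W \to \HH{1}{\PP(V)}{\II}^*$. Dualizing via \eqref{H1.Module.Equation} produces a nonzero $\alpha \in \HH{1}{\PP(V)}{\II}$ annihilated by $W$ under module multiplication. Using the ideal sheaf sequence to identify $\HH{1}{\PP(V)}{\II(k)}$ with the cokernel of $\Sym^k V \to \HH{0}{X}{\OO_X(k)}$, I lift $\alpha$ to $\tilde\alpha \in \HH{0}{X}{\OO_X}$; the annihilation then yields a linear map $\ell : W \to V$ satisfying $w \cdot \tilde\alpha = \ell(w)|_X$ in $\HH{0}{X}{\OO_X(1)}$, well-defined because $\HH{0}{\PP(V)}{\II(1)} = 0$ for $d \geq r+1$ points in linear general position.

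Next, I will use $\ell$ to assemble an explicit Koszul cocycle. The ``$2\times 2$-minor'' quadrics $Q(w_1, w_2) := w_1 \ell(w_2) - w_2 \ell(w_1)$ lie in $\HH{0}{\PP(V)}{\II(2)}$, since both $w_i \ell(w_j)|_X$ compute $w_1 w_2 \tilde\alpha$, and they satisfy the identical Jacobi relation $w_3 Q(w_1,w_2) - w_2 Q(w_1,w_3) + w_1 Q(w_2,w_3) = 0$ in $\Sym^3 V$. Fixing a basis $v_1, \ldots, v_r$ of $W$ and setting $Q_{ij} = Q(v_i, v_j)$, I then form
\[
\Omega \ = \ \sum_{1 \leq i < j \leq r} (-1)^{i+j} \, v_1 \wedge \cdots \widehat{v_i} \cdots \widehat{v_j} \cdots \wedge v_r \otimes Q_{ij} \ \in \ \Lambda^{r-2} V \otimes \HH{0}{\PP(V)}{\II(2)}.
\]
Expanding the Koszul differential and regrouping by triples $\{i,j,k\}$, each group is a signed multiple of $v_k Q_{ij} - v_j Q_{ik} + v_i Q_{jk}$ and so vanishes by Jacobi, giving $d\Omega = 0$. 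Since $\HH{0}{\PP(V)}{\II(1)} = 0$, the $K_{r-2,2}$-complex has no coboundary term, so $\Omega$ directly represents a class in $K_{r-2,2}(\PP(V), \II; V)$.

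To see $\Omega \ne 0$, I argue by contradiction: if every $Q_{ij}$ vanished, a divisibility argument in the UFD $\Sym V$ would force $\ell$ to act as a scalar $c$ on $W$; the relation $w(\tilde\alpha - c) = 0$ on $X$ for every $w \in W$, combined with basepoint-freeness of $W$ on $X$ (which holds since $p \notin X$), would then force $\tilde\alpha \equiv c$ and $\alpha = 0$. Hence $\Omega$ defines a nonzero class in $K_{r-2,2}(\PP(V), \II; V)$, and Green's theorem yields the conclusion. The main obstacle I expect is the sign-bookkeeping in the construction of $\Omega$ -- choosing signs so that the Koszul differential matches the Jacobi identity on each triple -- but once that is pinned down, the remaining steps are direct.
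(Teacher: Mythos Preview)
Your argument is correct and arrives at the same endpoint as the paper --- a nonzero class in $K_{r-2,2}(\PP(V),\II;V)$, followed by Green's criterion --- but the path is genuinely different. The paper does not build an explicit cocycle. Instead it passes to $\PP(W)$: via projection from the point $p$ it constructs an auxiliary sheaf $\JJ$ on $\PP(W)$ whose low-degree cohomology agrees with that of $\II$ (Lemma~\ref{Lemma.in.Appendix}), then runs the hypercohomology spectral sequence of the Koszul complex tensored with $\JJ$. The non-injectivity of $H^1(\II)\to W^*\otimes H^1(\II(1))$ forces a nonzero $E_2$-term on the $H^0$-row, i.e.\ $K_{r-2,2}(\II;W)\ne 0$, and the long exact sequence of Lemma~\ref{Elem.Koszul.Lemma|}(ii) (together with $K_{r-2,1}(\II;W)=0$) transports this to $K_{r-2,2}(\II;V)\ne 0$. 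Your approach bypasses the sheaf $\JJ$, the spectral sequence, and the $W$-to-$V$ comparison entirely: you read off the annihilation $W\cdot\alpha=0$ directly as a linear lift $\ell:W\to V$, manufacture the ``$2\times2$-minor'' quadrics $Q_{ij}\in H^0(\II(2))$, and assemble them into an explicit element of $\Lambda^{r-2}V\otimes H^0(\II(2))$ that is closed by the Jacobi identity and nonzero by an elementary UFD argument. This is more hands-on and self-contained --- one never leaves $\PP(V)$ --- while the paper's route is more structural and sits naturally in the Koszul-duality framework used elsewhere in the paper. Your anticipated obstacle (the sign bookkeeping in $d\Omega$) is indeed the only delicate point, and it works out as you say once one writes the Koszul differential in its basis-free form on decomposable tensors.
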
 
\noi Equivalently, fix a subspace $W \subseteq V$ of codimension one that generates $\OO_X$. In view of Lemma \ref{Key.Lemma.Section.1}, we need to show that if the mapping
\begin{equation} \label{W*.Equation}
\HH{1}{\PP(V)}{\II_X(1)}^* \otimes W \lra  \HH{1}{\PP(V)}{\II_X}^* \end{equation}
fails to be surjective, then $X$ lies on a rational normal curve. 

Since $W \subseteq V$, each $\HHstar{i}{\PP(V)}{\II}$ has the structure of a $\Sym(W)$-module.  The first point is that in bounded degrees, one can realize these  as the cohomology modules of a sheaf $\JJ$ on $\PP(W)$. Specifically: \begin{lemma} \label{Lemma.in.Appendix}
For any suitably large integer $k_0 \gg0$ one can construct a coherent sheaf $\JJ $  on $\PP(W)$ with the property that for $i < r-1 = \dim \PP(W)$ there are isomorphisms
\[
\HHstar{i}{\PP(V)}
{\II}_{\le k_0} \ \cong \ \HHstar{i}{\PP(W)}{\JJ}_{\le k_0}
\]
in degrees $\le k_0$, and these isomorphisms are compatible with the $\Sym(W)$-module structures on both sides. 
\end{lemma}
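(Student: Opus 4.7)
The plan is to construct $\JJ$ geometrically by linearly projecting $\PP(V)$ from the point $p = [V/W]$. Since $W$ generates $\OO_X$, the point $p$ is disjoint from $X$. Let $\sigma : \widetilde Y \to \PP(V)$ be the blowup of $\PP(V)$ at $p$, with exceptional divisor $E$, and let $\pi : \widetilde Y \to \PP(W)$ be the induced $\PP^1$-bundle projection. Write $H = \sigma^*\OO_{\PP(V)}(1)$ and $L = \pi^*\OO_{\PP(W)}(1)$, which satisfy $H = L + E$. Since the strict transform $\widetilde X = \sigma^{-1}(X)$ is disjoint from $E$, the pullback $\sigma^*\II$ coincides with $\II_{\widetilde X}$ and is trivial in a neighborhood of $E$. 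I will then take
\begin{equation*}
\JJ \; :=\; \pi_*\bigl(\sigma^*\II \otimes \OO_{\widetilde Y}(k_0 E)\bigr).
\end{equation*}

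The first step is to verify that $R^q\pi_*(\sigma^*\II\otimes \OO(k_0 E)) = 0$ for $q\ge 1$. A fiber-by-fiber check suffices: on any fiber $F \cong \PP^1$ the sheaf restricts to $\OO_F(k_0 - e)$, where $e$ is the number of points of $\widetilde X$ on $F$, and $H^1$ vanishes once $k_0$ is larger than $d$. The projection formula and Leray then give
\begin{equation*}
H^i(\PP(W), \JJ(m)) \; \cong \; H^i\bigl(\widetilde Y,\; \sigma^*\II(m) \otimes \OO((k_0 - m)E)\bigr)
\end{equation*}
for every $i$ and $m$. For $m \leq k_0$, I compare the right side with $H^i(\widetilde Y, \sigma^*\II(m)) = H^i(\PP(V), \II(m))$ (the latter identification holding because $\sigma$ is the blowup of a smooth point disjoint from $X$) using the short exact sequence
\begin{equation*}
0 \to \sigma^*\II(m) \to \sigma^*\II(m)\otimes\OO((k_0 - m) E) \to \OO_{(k_0 - m)E}((k_0 - m)E) \to 0,
\end{equation*}
obtained by multiplying by the canonical section of $\OO((k_0-m)E)$. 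The resulting long exact sequence reduces the claim to the vanishing $H^i(\widetilde Y, \OO_{cE}(cE)) = 0$ for $0 \leq i \leq r-2$ and $c \geq 0$. Compatibility with the $\Sym W$-module structures follows by naturality, since $W$ identifies with $H^0(\widetilde Y, \OO(L))$ as the subspace of $V = H^0(\widetilde Y, \OO(H))$ consisting of sections vanishing on $E$, and multiplication by these sections implements the $\Sym W$-action on both sides.

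The main obstacle is the cohomology vanishing in the previous paragraph. My plan for this is an inductive filtration argument using the short exact sequences $0 \to \OO((c-1)E) \to \OO(cE) \to \OO_E(-c) \to 0$. Since $H^i$ of $\OO_E(-c)$ on $E \cong \PP^{r-1}$ is concentrated in degrees $0$ and $r-1$, and $H^0(\OO_E(-c)) = 0$ for $c \geq 1$, one inductively finds $H^i(\widetilde Y, \OO(cE)) = H^i(\OO_{\widetilde Y})$ for $0 \leq i \leq r-2$ and all $c \geq 0$; and since $\widetilde Y$ is smooth rational of dimension $r$, $H^i(\OO_{\widetilde Y}) = 0$ for $i \geq 1$. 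Plugging these into the short exact sequence $0 \to \OO_{\widetilde Y} \to \OO(cE) \to \OO_{cE}(cE) \to 0$ then yields the required vanishing.
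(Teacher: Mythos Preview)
Your construction is essentially the same as the paper's: both project from the point $p=[V/W]$, and your $\JJ=\pi_*(\sigma^*\II\otimes\OO(k_0E))$ is exactly the kernel of the evaluation $\eps:\Sym^{k_0}(\OO\oplus\OO(-1))\to\pi_*\OO_X$ that the paper writes down, since $\pi_*\OO_{\widetilde Y}(k_0E)\cong\Sym^{k_0}(\OO\oplus\OO(-1))$. The paper works directly on the affine bundle $\PP(V)\setminus\{p\}$ and leaves the cohomological comparison entirely to the reader, whereas you compactify via the blowup and supply a complete verification; one small inaccuracy is that $\sigma^*\II|_F$ acquires a torsion summand at the points of $\widetilde X\cap F$ (it is not literally $\OO_F(k_0-e)$), but this does not affect the vanishing of $H^1$ on fibers, and since $R^1\pi_*$ is right exact here the fiberwise check is legitimate.
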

\vskip -5pt
 
Granting the Lemma for the time being, we give the 
\begin{proof} [Proof of Proposition \ref{Proposition.Appendix}] 
Tensoring the universal Koszul complex on $\PP(W)$ by $\JJ$, one arrives at a long exact sequence
\[
0 \lra \Lambda^r W \otimes \JJ \lra \Lambda^{r-1}W \otimes \JJ(1) \lra \Lambda^{r-2}W \otimes \JJ(2) \lra \ldots \lra \JJ(r)\lra 0
\]
of sheaves on $\PP(W)$. This in turn gives rise to a hypercohomology spectral sequence abutting to zero. The  bottom two rows of its $E_1$ page have the form
\vskip -15pt
\[
\xymatrix@C=12pt@R=5pt{
0 \ar[r] &\Lambda^r W \otimes H^1(\JJ)  \ar[r] &\Lambda^{r-1} W \otimes H^1(\JJ(1)) \ar[r] &\Lambda^{r-2} W \otimes H^1(\JJ(2)) \ar[r] &\Lambda^{r-3} W \otimes H^1(\JJ(3)) \ar[r] &
\\
0 \ar[r]&\Lambda^r W \otimes H^0(\JJ)  \ar[r] &\Lambda^{r-1} W \otimes H^0(\JJ(1)) \ar[r] &\Lambda^{r-2} W \otimes H^0(\JJ(2)) \ar[r] &\Lambda^{r-3} W \otimes H^0(\JJ(3)) \ar[r] &
}
\]
where the cohomology groups are taken on $\PP(W)$. The assumption \eqref{W*.Equation}
means (thanks to the Lemma) that the map 
\[  H^1(\II) \, = \, \Lambda^r W \otimes H^1(\JJ)  \lra \Lambda^{r-1} W \otimes H^1(\JJ(1)) \, = \, W^* \otimes  H^1(\II(1))\]  has a non-trivial kernel. This must cancel against the $E_2$ term coming from the bottom row of the spectral sequence. In other words,
\[    K_{r-2,2}(\PP(W), \JJ) \ \ne \ 0. \]
But $K_{r-2,1}(\PP(W), \JJ) = 0$ since $X$ does not lie on a hyperplane, so by (the analogue of) Lemma \ref{Elem.Koszul.Lemma|} (ii), we conclude that $K_{r-2,2}(\PP(V), \II) \ne 0$. Then Green's theorem applies to put $X$ on a rational normal curve. 
\end{proof}

\begin{proof}[Proof of Lemma \ref{Lemma.in.Appendix}]
Let $w \in \PP(V)$ be the point corresponding to $W \subseteq V$, so that in particular $w \not \in X$. Projection 
$
\pi : \big( \PP(V) - \{ w \}\big) \lra \PP(W)
$
from $w$ gives an identification
\[   \PP(V) - \{ w \} \ \cong \ \Spec_{\PP(W)}\Big(\Sym  \big( \OO_{\PP(W)} \oplus \OO_{\PP(W)}(-1) \big) \Big). \]
Then for $k_0 \gg 0$, one arrives at a surjective mapping
\[
\eps : \Sym^{k_0}  \big( \OO_{\PP(W)} \oplus \OO_{\PP(W)}(-1) \big) \lra \pi_* \OO_X
\]
of sheaves on $\PP(W)$. It suffices to take $\JJ = \ker (\eps)$. 
\end{proof}

\end{document}